\DeclareSymbolFont{AMSb}{U}{msb}{m}{n}
\newtheorem{cor}{Corollary}[section]
\newtheorem*{cor*}{Corollary}
\newtheorem{lem}[cor]{Lemma}
\newtheorem*{lem*}{Lemma}
\newtheorem{thm}[cor]{Theorem}
\newtheorem*{thm*}{Theorem}
\newtheorem*{conj*}{Conjecture}
\newtheorem{prop}[cor]{Proposition}
\newtheorem*{prop*}{Proposition}
\theoremstyle{definition}
\newtheorem{defn}[cor]{Definition}
\newtheorem{rmk}[cor]{Remark}
\newcommand{\bC}{\mathbb{C}}
\newcommand{\bP}{\mathbb{P}}
\newcommand{\cB}{\mathcal{B}}
\newcommand{\cV}{\mathcal{V}}
\newcommand{\cX}{\mathcal{X}}
\newcommand{\sF}{\mathscr{F}}
\newcommand{\aut}{\operatorname{Aut}}
\title{Monodromy of general hypersurfaces}
\author{Maria Gioia Cifani}
\address[M.G.C.]{Department of Mathematics 'F. Casorati', University of Pavia, via Ferrata 5, 27100 Pavia, Italy}
\email{mariagioia.cifani@unipv.it}
\begin{document}
\begin{abstract}
Let $X$ be a general complex projective hypersurface in $\bP^{n+1}$ of degree $d>1$. A point $P$ not in $X$ is called uniform if the monodromy group of the projection of $X$ from $P$ is isomorphic to the symmetric group. We prove that all the points in $\bP^{n+1}$ are uniform for $X$, generalizing a result of Cukierman on general plane curves. 
\end{abstract}
\maketitle

\section{Introduction}
The monodromy group of linear projections of irreducible complex projective varieties has been intensively studied. Fixed an irreducible and reduced projective hypersurface $X \subset \bP^{n+1}$, consider its linear projections from a point $P \in \bP^{n+1}$. We want to look at those maps from a topological point of view: in particular, we aim to classify the centres of projection through their monodromy group. We recall that we can give also an algebraic description: indeed, the monodromy group is isomorphic to the Galois group for finite dominant morphisms between irreducible complex varieties \cite[Section I]{H}.

We will say that a point $P$ is \emph{uniform} for $X$ if the monodromy group of the projection from $P$ is the symmetric group, \emph{non uniform} otherwise.
 
A direct consequence of the Castelnuovo's uniform position principle, in the formulation of Harris \cite{JHCurves}, is that a general projection has always symmetric monodromy group. In 2005 Pirola and Schlesinger \cite{PS} improved this result showing that an irreducible and reduced plane curve admits at most a finite number of non uniform points. Moreover, in \cite{CMS} it is proved that smooth surfaces in $\bP^3$ admit at most a finite number of non uniform points. More recently the author, Cuzzucoli and Moschetti \cite{CCM} studied the case of hypersurfaces of higher dimension proving that, except for special configurations, the non uniform locus is contained in linear subspaces of codimension two. In particular, we proved that smooth hypersurfaces admit at most a finite number of non uniform points \cite[Theorem 1.3]{CCM}. 

Examples of smooth hypersurfaces admitting at least a non uniform point are known (see for instance \cite{Miura1} \cite{MY1} for plane curves or \cite{Yoshiara} for hypersurfaces). 
One may ask if every smooth hypersurface admit non uniform points, but the answer is negative. 

In 1999 Fernando Cukierman (\cite{Cuk}) proved that for general plane curves, all the outer points are uniform. In this work we generalize this result proving the following 
\begin{thm}
Let $X \subset \bP^{n+1}$ be a general hypersurface of degree $d>1$. Then all the points $P \in \bP^{n+1}$ are uniform.
\end{thm}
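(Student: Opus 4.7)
The plan is to reformulate the theorem as a dimension count on an incidence variety and to bound, for each fixed point, the codimension of the locus of hypersurfaces making that point non uniform. Let $\bP^N$ parametrize degree-$d$ hypersurfaces in $\bP^{n+1}$, let $\mathcal{H} \subset \bP^N$ be the open subset of smooth ones, and consider
\[
\mathcal{I} = \{(X, P) \in \mathcal{H} \times \bP^{n+1} : P \text{ is non uniform for } X\},
\]
with the two projections $\pi_1, \pi_2$. By \cite[Theorem~1.3]{CCM}, $\pi_1$ has finite fibres, so $\dim \mathcal{I} \leq N$; the theorem is equivalent to $\pi_1$ being non-dominant. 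Since $\mathrm{PGL}_{n+2}(\bC)$ acts transitively on $\bP^{n+1}$ and preserves non uniformity, the fibres of $\pi_2$ all have the same dimension, so it suffices to fix one $P_0 \in \bP^{n+1}$ and prove
\[
\codim_{\bP^N} \pi_2^{-1}(P_0) \geq n+2.
\]

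The main algebraic tool is the classical fact that a $2$-transitive subgroup of $S_d$ containing a transposition equals $S_d$. Consequently, $P_0$ is non uniform for $X$ only if \textbf{(a)} the monodromy $G_{P_0}$ of $\pi_{P_0}$ fails to be $2$-transitive on a general fibre, or \textbf{(b)} $G_{P_0}$ contains no transposition, i.e.\ no line through $P_0$ is simply tangent to $X$ (meeting $X$ to order exactly $2$ at one smooth point and transversally at the remaining intersections). Condition~(a) is equivalent to reducibility of the secant incidence $\{(x,y) \in X \times X : x \neq y,\ P_0, x, y \text{ collinear}\}$, while condition~(b) is controlled by the intersection of the dual variety $X^{\ast}$ with the $\bP^{n}$ of hyperplanes through $P_0$.

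Case~(a) I would handle by adapting the irreducibility arguments of \cite{CCM} and \cite{PS}: for the fixed $P_0$, reducibility of the secant incidence imposes strictly more than $n+1$ independent linear conditions on the coefficients of $X$, and in fact $2$-transitivity should already hold for every smooth irreducible $X$, making (a) essentially vacuous. Case~(b), the substantive step, I would address by exhibiting one explicit smooth hypersurface $X_0$ (for instance a Fermat-type member positioned so that $P_0$ is in general position with respect to its dual and inflection loci) which admits a line through $P_0$ with exactly one simple tangency, verified by a direct local jet computation at a chosen smooth point of $X_0$. Openness of the simply-tangent-line condition together with upper-semicontinuity of tangency multiplicities then produces a Zariski-open neighbourhood of $X_0$ in $\mathcal{H}$ on which $P_0$ is uniform, and an explicit count of the independent jet conditions at $P_0$ upgrades this to the codimension bound $n+2$.

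The main obstacle I foresee is case~(b) when $P_0 \in X$: the projection $\pi_{P_0}$ then has degree $d-1$, and ``simple ramification'' must be re-expressed through the second fundamental form of $X$ at $P_0$ together with the intersection behaviour of lines through $P_0$ meeting $X$ elsewhere. The codimension bookkeeping at inner points is markedly more delicate than at outer ones; a natural route is to stratify $\bP^N$ according to the type of the strictest tangent configuration from $P_0$ and analyse each stratum in turn. Once $\codim \pi_2^{-1}(P_0) \geq n+2$ is established uniformly in $P_0$, we conclude $\dim \mathcal{I} \leq (n+1) + (N - n - 2) = N - 1 < N$, so $\pi_1$ is not dominant and the theorem follows.
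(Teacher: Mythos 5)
Your strategy --- fixing the centre $P_0$, bounding the codimension in $\bP^N$ of the locus of hypersurfaces for which $P_0$ is non uniform, and concluding by a dimension count on the incidence variety --- is entirely different from the paper's proof, which proceeds by induction on $d$: one degenerates $X$ to the union of a general hypersurface $Y$ of degree $d-1$ and a hyperplane, shows that the monodromy of the projection restricted to $Y$ is contained in $M(\pi_P)$ under the degeneration, and concludes with transitivity and primitivity lemmas for permutation groups, the base case $d=3$ being settled by the triviality of $\aut(X)$. Your approach is in the spirit of \cite{PS} and \cite{CCM}, but as written it has genuine gaps that prevent it from being a proof.

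First, the claim that condition (a) is ``essentially vacuous'' because $2$-transitivity ``should already hold for every smooth irreducible $X$'' is false: the Fermat quartic curve $x^4+y^4+z^4=0$ has the outer Galois point $(0:0:1)$, from which the projection has monodromy group cyclic of order four acting regularly on the four points of a general fibre --- transitive but not $2$-transitive. So the locus where (a) fails is nonempty for every $d\ge 3$, and its codimension over a fixed $P_0$ must actually be estimated; no such estimate appears in the proposal. Second, for condition (b) you exhibit a single hypersurface $X_0$ admitting a simply tangent line through $P_0$; by openness this shows only that the locus where (b) holds has codimension at least $1$, whereas your dimension count requires codimension at least $n+2$. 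The sentence ``an explicit count of the independent jet conditions upgrades this to the codimension bound $n+2$'' is not an argument: nothing in the proposal identifies $n+2$ independent conditions, and producing them is precisely the hard quantitative content of this approach (the analogous counts in \cite{PS} and \cite{CMS} only yield finiteness of the non-uniform locus for a fixed $X$, not the fibre-wise codimension over a fixed $P_0$ that you need). Finally, the inner-point case, which you correctly flag as more delicate, is left entirely open. The paper's degeneration-plus-group-theory argument avoids all of these quantitative issues, which is presumably why it succeeds where the dimension count remains out of reach.
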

The result was already known for a special class of non uniform points that are the Galois points (\cite[Theorem 1]{Yoshiara}). We remark also that the Theorem extends the result of Cukiermann to inner points of general plane curves. 

The proof combines inner and outer projections and it is based on an induction argument on the degree of the variety: we degenerate the hypersurface $X$ to a limit one given by a general hypersurface $Y$ of degree $d-1$ and a hyperplane. 
The base case of the induction ($d=3$, Theorem \ref{d3}) is consequence of a result of Matsumura and Monsky \cite{MM} saying that a general hypersurface has trivial automorphism group. More in general, the induction step is based on the study of the behaviour of the monodromy group of the projection $\pi_P$ of $X$ from $P$ under degenerations (Lemma \ref{induction}). 
\begin{lem}
Let $P \in \bP^{n+1}$ and let $\pi_0$ be the map $\pi_P$ restricted to $Y$. Then, the monodromy group $M(\pi_0)$ is contained in the monodromy group $M(\pi_P)$.
\end{lem}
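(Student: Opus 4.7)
The plan is to track the monodromy of the projection across a one-parameter degeneration. Embed the specialization in a flat family $\cX \to T$ over a small disk $T \ni 0$ with $\cX_t = X_t$ a smooth hypersurface of degree $d$ for $t \neq 0$ and $\cX_0 = Y \cup H$, and let $\Pi : \cX \to \bP^n \times T$ be the relative projection from $\{P\}\times T$. Since $\deg(Y)+\deg(H)=d$, this is a finite morphism of degree $d$; its branch locus $\cB \subset \bP^n \times T$ is closed, with fiber $\cB_t = B_t$ (the branch divisor of $\pi_P|_{X_t}$) for $t \neq 0$ and $\cB_0 = B_Y \cup \pi_P(Y \cap H)$, where $B_Y$ is the branch locus of $\pi_0=\pi_P|_Y$ and the second piece is the codimension-one locus where the $Y$-sheets meet the unique $H$-sheet.

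The key step is to choose loops generating $M(\pi_0)$ that avoid all of $\cB_0$. Since the fundamental group of the complement of a hypersurface in $\bP^n$ is generated by meridians around smooth points of the divisor, $M(\pi_0)$ is generated by monodromies along small loops meeting $B_Y$ transversally at smooth points. As $\pi_P(Y\cap H)$ is a proper closed subvariety of $\bP^n$, such meridians can be picked to lie in $\bP^n \setminus (B_Y \cup \pi_P(Y\cap H))$.

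Fix such a loop $\gamma \subset \bP^n \setminus \cB_0$ based at a general point $q$. By compactness of $\gamma$ and closedness of $\cB$, there exists a neighborhood $T'\subset T$ of $0$ such that $\gamma \times T' \subset (\bP^n\times T')\setminus \cB$; thus $\Pi$ restricted to a tubular neighborhood of $\gamma\times T'$ is a topological cover of degree $d$. The fiber over $(q,0)$ decomposes as $\{y_1,\dots,y_{d-1}\}\cup\{h_q\}$, with the $y_i$'s on $Y$ and $h_q=\overline{Pq}\cap H$. The monodromy of $\pi_0$ along $\gamma$ permutes the $y_i$'s by some $\sigma \in M(\pi_0)$ and fixes $h_q$ (because $\pi_P|_H$ is an isomorphism). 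A local trivialization of $\Pi$ over $\gamma\times T'$ identifies the fiber over $(q,t)$ with the one over $(q,0)$ via parallel transport in the $T$-direction, and under this identification the monodromy of $\pi_{P}|_{X_t}$ along $\gamma$ coincides with that of $\pi_0$. Hence $\sigma$, regarded as an element of $S_d$ fixing the sheet near $h_q$, belongs to $M(\pi_P)$. Since the $\sigma$'s generate $M(\pi_0)$, we conclude $M(\pi_0) \subseteq M(\pi_P)$.

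The main obstacle is the control of the branch locus $\cB$ near $t=0$: one must ensure that $\Pi$ is sufficiently well-behaved (flat, with branch fiber at $t=0$ as described) despite the nodes of $Y\cup H$ along $Y \cap H$, so that loops avoiding $\cB_0$ indeed avoid $B_t$ for all small $t$. This should follow from flatness of the pencil $\{X_t\}$ in the space of degree-$d$ hypersurfaces containing $Y\cup H$ together with the explicit geometry along $Y\cap H$, but it is the point where care is needed.
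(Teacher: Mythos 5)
Your core mechanism --- choose loops generating $M(\pi_0)$ that avoid the central branch fibre, then use compactness and local triviality of the family over the complement of the branch locus to transport them to nearby fibres --- is essentially the paper's argument (it is the content of Propositions \ref{lem:Noriprelim}, \ref{lemmagenerale} and \ref{redcomponent}, specialized to this pencil), and for the configuration you actually treat it works. The incidental worry you raise at the end is not the real problem: since $X_0=Y\cup H$ is reduced, the relative projection is generically \'etale on the central fibre as well, so $\cB$ is a proper closed subset of $\bP^n\times T$ meeting the central slice in a proper closed subset, and your compactness argument goes through.

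The genuine gap is that your setup silently assumes $P\notin\cX$, i.e.\ that $P$ lies outside \emph{every} member of the pencil, including the central fibre. The lemma is stated for arbitrary $P\in\bP^{n+1}$, and the induction in the main theorem uses precisely the excluded configurations: $P\in X$ (so $P\in X_0$), and also $P\notin X$ but $P\in Y$ after degeneration. In those cases $\Pi$ is not a finite morphism of degree $d$ --- it is undefined at $P\in\cX$, the fibre over a general $(q,0)$ is no longer ``$d-1$ points of $Y$ plus $h_q$'', and $\pi_0$ itself drops to degree $d-2$ when $P\in Y$ --- so the covering-space picture you build over $\gamma\times T'$ does not exist as stated. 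One must replace $\cX$ by the family of strict transforms under the blow-up of $\bP^{n+1}$ at $P$ (which leaves the monodromy unchanged since one blows up a smooth point), and then handle the worst case $P\in Y\cap H=\operatorname{Sing}(X_0)$, where the strict transform of the central fibre can fail to be reduced: there one can only transport loops supported on the reduced component $\widetilde{Y}$, which is exactly what Proposition \ref{redcomponent} is for. Your proof needs this additional layer to cover the statement as written and as used.
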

This Lemma is based on some classical topological results on homotopy of fibrations (Proposition \ref{lem:Noriprelim}), reported in Section \ref{sec3}. In particular, we considered the case of a family of dominant maps $F: \cX \to Y \times \bP^1$ parametrised by $\bP^1$, where $\cX$ is a flat family of projective varieties of dimension $m$ in $\bP^N$ and $Y$ is a smooth projective variety. For a general $s \in \bP^1$, the fibre over $\bP^1$ is a smooth projective variety $X_s$ of dimension $m$ in $\bP^N$, together with a finite dominant morphism $f_s: X_s \to Y$ of degree $d$. We deduce a result on monodromy groups (Proposition \ref{lemmagenerale}):
\begin{prop}
If $X_s$ is reduced for every $s \in \bP^1$, then  $$M(F)\cong M(f_s).$$
\end{prop}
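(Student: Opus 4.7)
The plan is to realise both monodromy groups as subgroups of the symmetric group $S_d$ acting on a common general fibre of $F$, and then to compare them through the map of fundamental groups induced by the inclusion of a single slice into $Y \times \bP^1$. Concretely, let $B_F \subset Y \times \bP^1$ be the branch locus of $F$, set $U := (Y \times \bP^1) \setminus B_F$, and for general $s \in \bP^1$ put $B_s := B_F \cap (Y \times \{s\})$ and $U_s := (Y \times \{s\}) \setminus B_s$. Since $\cX \to \bP^1$ is flat and every $X_s$ is reduced, the scheme-theoretic fibre $F^{-1}(Y \times \{s\})$ equals $X_s$; hence the degree-$d$ étale cover $F\colon F^{-1}(U) \to U$ restricts over $U_s$ to the degree-$d$ étale cover $f_s\colon f_s^{-1}(U_s) \to U_s$. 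Fixing a base point $(y_0, s_0) \in U_{s_0}$, the fibre $F^{-1}(y_0, s_0) = f_{s_0}^{-1}(y_0)$ is a single $d$-element set on which both $M(F)$ and $M(f_{s_0})$ act as subgroups of $S_d$.

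The easy inclusion $M(f_{s_0}) \subseteq M(F)$ is then immediate from functoriality: the inclusion $\iota\colon U_{s_0} \hookrightarrow U$ induces $\iota_\ast\colon \pi_1(U_{s_0}) \to \pi_1(U)$, and because the étale cover over $U$ pulls back to the étale cover over $U_{s_0}$, every permutation realised by a loop $\gamma \in \pi_1(U_{s_0})$ is realised by its image $\iota_\ast\gamma \in \pi_1(U)$.

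For the reverse inclusion, the point is to prove that $\iota_\ast$ is surjective. Let $V \subset \bP^1$ be the open subset over which the second projection $p\colon U \to \bP^1$ is a topological fibration; applying Proposition \ref{lem:Noriprelim} to $p|_{p^{-1}(V)}$ yields an exact sequence $\pi_1(U_{s_0}) \to \pi_1(p^{-1}(V)) \to \pi_1(V) \to 1$. I would then transfer this to $U$ by noting that the inclusion $p^{-1}(V) \hookrightarrow U$ adds back codimension-one strata lying over $\bP^1 \setminus V$, hence on $\pi_1$ it is surjective with kernel generated by the meridians around these strata; these meridians map to the generators of $\pi_1(V)$. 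The crucial observation is that each such meridian becomes trivial in $\pi_1(U)$: a small loop around $s_1 \in \bP^1 \setminus V$ bounds a disc $D \subset \bP^1$ passing through $s_1$, and this disc lifts to a contracting disc in $U$ provided $(Y \times \{s_1\}) \not\subset B_F$. This last condition is guaranteed by the reducedness of $X_{s_1}$, since then $f_{s_1}$ is generically étale and $B_{s_1} \subsetneq Y \times \{s_1\}$. Combining the two observations forces $\iota_\ast$ to be surjective, and therefore $M(F) \subseteq M(f_{s_0})$.

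The main obstacle is precisely this meridian-contractibility step: without the global reducedness hypothesis, some fibre $X_{s_1}$ could be everywhere nonreduced, in which case $Y \times \{s_1\} \subset B_F$ and the meridian around $s_1$ would persist as a nontrivial element of $\pi_1(U)$ that might act nontrivially on the fibre of $F$. Reducedness of every $X_s$ — not merely of the generic one — is exactly what allows Proposition \ref{lem:Noriprelim} to deliver the surjectivity of $\iota_\ast$ needed to close the argument.
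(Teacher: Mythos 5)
Your proof is correct and follows essentially the same route as the paper: both arguments reduce the statement to the surjectivity of $\pi_1\bigl((Y\times\{s\})\setminus \mathcal{B}\bigr)\to\pi_1\bigl((Y\times\bP^1)\setminus\mathcal{B}\bigr)$, identify reducedness of every fibre as the condition guaranteeing that no slice $Y\times\{s\}$ lies inside the branch divisor, and obtain the easy inclusion by functoriality. The only difference is cosmetic: the paper applies Proposition \ref{lem:Noriprelim} directly to the global map $q'\colon (Y\times\bP^1)\setminus\mathcal{B}\to\bP^1$, whereas you apply it only over the locally trivial locus and then re-derive its content over the remaining fibres by hand with the meridian-contraction argument.
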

More generally, if there is a non reduced fibre $X_0$ with a reduced component $Z$, we have (Proposition \ref{redcomponent})
\begin{prop}
The monodromy group of $f_0$ restricted to $Z$ is contained in the monodromy group $M(f_s)$ for a general $s$ in a neighbourhood of $0$. 
\end{prop}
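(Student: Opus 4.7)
The plan is to localize Proposition~\ref{lemmagenerale} around the reduced component $Z$. Since $Z$ appears with multiplicity one in $X_0$, the generic point of $Z$ is a smooth point of $\cX$ at which the projection $\cX\to\bP^1$ is smooth. Hence there exist a dense open $Z^{\circ}\subset Z$ and a Zariski open neighbourhood $\cW\subset\cX$ of $Z^{\circ}$ such that $\cW\cap X_0=Z^{\circ}$, $\cW$ is contained in the smooth étale locus of $F$, the restriction $F|_{\cW}\colon \cW\to F(\cW)$ is a finite étale cover of degree $d':=\deg(f_0|_Z)$, and $F(\cW)\subset Y\times\bP^1$ contains a product $U\times\Delta$ with $U\subset Y$ open disjoint from $B(f_0|_Z)$ and $\Delta$ a small analytic disc around $0$. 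At each $p\in Z^\circ$ the implicit function theorem provides a local section $\sigma_p\colon\Delta\to\cX$ of $\cX\to\bP^1$ with $\sigma_p(0)=p$ and $F(\sigma_p(s))=(f_0(p),s)$, giving an embedding of $(f_0|_Z)^{-1}(y_0)$ into $f_s^{-1}(y_0)$ for each $y_0\in U$ and each $s\in\Delta$ sufficiently close to $0$.

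Next I apply the homotopy principle underlying Proposition~\ref{lemmagenerale}, namely Proposition~\ref{lem:Noriprelim}, to the étale cover $F|_\cW$. Since $\Delta$ is contractible, each slice inclusion $U\times\{s\}\hookrightarrow U\times\Delta$ is a homotopy equivalence; hence $M(F|_\cW)$ agrees with the monodromy of $f_s|_{\cW\cap X_s}\colon\cW\cap X_s\to U$ for every $s\in\Delta$, and in particular with that of $f_0|_{Z^\circ}\colon Z^\circ\to U$. Moreover $(Y\setminus B(f_0|_Z))\setminus U$ is an algebraic closed subset of positive complex codimension, so $\pi_1(U)\twoheadrightarrow\pi_1(Y\setminus B(f_0|_Z))$ is surjective and the monodromy of $f_0|_{Z^\circ}$ over $U$ coincides with the full group $M(f_0|_Z)$.

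To embed this common group into $M(f_s)$ in the required sense, I would present $M(f_0|_Z)$ by small loops around generic points of the irreducible components of $B(f_0|_Z)$, chosen so as to avoid the branch locus of $f_0$ coming from the other (non-reduced) components of $X_0$. Near such a chosen point $q$, for $s$ sufficiently small the divisor $B(f_s)$ meets a small disc around $q$ only in the unique branch point $q_s\to q$ carrying the deforming sheets of $Z$, while the remaining $d-d'$ sheets of $f_s$ over that disc are étale. The small loop around $q$ then lifts through $f_s$ to a permutation that acts on the $d'$ sheets identified with $(f_0|_Z)^{-1}(y_0)$ via the sections $\sigma_p$ exactly as the chosen generator of $M(f_0|_Z)$, and that fixes the remaining $d-d'$ sheets. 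This exhibits each generator of $M(f_0|_Z)$ as an element of $M(f_s)$.

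The main obstacle is the last step: one must check that each irreducible component of $B(f_0|_Z)$ contains a generic point disjoint from the branch locus contributed by the other components of $X_0$, and that the corresponding local loop survives as a loop in the étale locus of $f_s$ without spuriously permuting the $d-d'$ complementary sheets. This reduces to a codimension and genericity analysis of the flat limit as $s\to 0$ of the branch divisor of $F$ over $Y\times\Delta^{\ast}$.
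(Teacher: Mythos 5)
Your proposal stops short of proving the statement: the step you yourself label ``the main obstacle'' --- checking that a meridian loop around a generic point $q$ of a component of the branch divisor of $g_0=f_0|_Z$ survives as a loop in the \'etale locus of $f_s$, acts on the $d'$ deforming sheets as the local monodromy of $g_0$, and does not pick up contributions from the limit of $B(f_s)$ --- is exactly the content of the proposition, and you leave it as an unperformed ``codimension and genericity analysis of the flat limit of the branch divisor.'' There are also two concrete problems earlier in the setup. First, a \emph{Zariski} open neighbourhood $\cW$ of $Z^{\circ}$ in $\cX$ is dense, hence meets each fibre $X_s$ in a dense open set, so $F|_{\cW}$ over $U\times\{s\}$ has degree close to $d$, not $d'$; you need an analytic tubular neighbourhood. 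Second, once $\cW$ is only an analytic neighbourhood, you cannot get a $d'$-sheeted covering over a product $U\times\Delta$ with $U$ Zariski open and $\Delta$ a \emph{fixed} disc: near the boundary of $U$ the sheets of $Z^{\circ}$ degenerate and the admissible radius of $\Delta$ shrinks to $0$, so the uniform covering structure you rely on for the homotopy-equivalence step is not available over all of $U$. (A further, minor point: for general $Y$ the group $\pi_1(Y\setminus B(g_0))$ is not generated by meridians alone, so presenting $M(g_0)$ by small loops around branch components needs the extra unramified generators as well.)

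The paper's proof sidesteps all of this by working with a single element $\sigma\in M(g_0)$ at a time and by lifting to the \emph{source} rather than analysing the branch locus in the target: represent $\sigma$ by a loop $\gamma$ in $Y\setminus B_0$, lift it to paths in $Z\setminus R_0$ avoiding the points where $Z$ meets the other components of $F_0$, and observe that these lifted paths form a \emph{compact} set along which $p$ is submersive (here is where multiplicity one of $Z$ enters). Ehresmann's theorem on a tubular neighbourhood of this compact set then deforms the lifted paths into a nearby smooth fibre $X_s$, and continuity of the endpoints shows the deformed paths realize the same permutation $\sigma$ inside $M(f_s)$. Restricting to a compact path is precisely the device that repairs the uniformity gap in your construction, and it eliminates the need for any analysis of how $B(f_s)$ degenerates as $s\to 0$. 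If you replace your global covering over $U\times\Delta$ by a tubular neighbourhood of the compact lift of one generating loop at a time, your argument essentially becomes the paper's.
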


To conclude the proof of the main Theorem, we use results on multiply transitive permutation groups (see Section \ref{permutazioni}). 

 \medskip

\textbf{Notations.}
All the varieties are assumed to be complex and projective. Let $\sF$ be a family of objects parametrised by a scheme $V$. We say the general element of $\sF$ satisfies a certain property if this property holds for every element in a Zariski dense open subset of $V$. Moreover, we will always use the Zariski topology, unless stated otherwise. 

\section{Preliminaries} \label{sec:preliminaries}
\subsection{Monodromy and Galois group}
Let $f:X \to Y$ be a finite dominant morphism of degree $d$ between complex irreducible reduced varieties of the same dimension. Let $U \subset Y$ be a Zariski open set over which $f$ is \'etale, and let $y$ denote a point in $U$. We have a well defined map
$$\mu: \pi_1(U,y) \to \aut\big(f^{-1}(y)\big) \simeq S_d.$$
The image $M(f):=\mu\left(\pi_1(U,y)\right)$ is called \emph{monodromy group} of the map $f$; it is a transitive subgroup of the symmetric group.

We can also describe this group by means of Galois extensions: let $K$ be the Galois closure of the extension $\bC(X)/\bC(Y)$, where $\bC(X),\bC(Y)$ define the fields of rational functions of $X$ and $Y$ respectively. Define the \emph{Galois group} $G(f)$ of the map $f$ to be the Galois group of the field extension $K/\bC(Y)$.
It turns out that $G(f)$ is isomorphic to $M(f)$, see \cite[Section I]{H}. We recall also that Galois group of a field extension $K/\bC(Y)$ is defined as the group of automorphisms of $K$ fixing $\bC(Y)$. 

\subsection{Automorphisms of general hypersurfaces}
Let $V$ be a projective variety; we denote by $\aut(V)$ the group of automorphisms of $V$. We will use the following result of Matsumura and Monsky \cite{MM}:
\begin{thm}\label{autom}
Let $X$ be a general hypersurface in $\bP^{n+1}$, with $ n\geq 2$ and $d\geq 3$. Then $\operatorname{Aut}(X)$ is trivial.
\end{thm}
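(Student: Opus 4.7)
The plan is to reduce the statement to a dimension count on an incidence variety inside $\bP^N \times \mathrm{PGL}_{n+2}$, where $N = \binom{n+1+d}{d} - 1$ so that $\bP^N$ parametrises degree-$d$ hypersurfaces in $\bP^{n+1}$. The first step is to show that every automorphism of a smooth hypersurface $X \subset \bP^{n+1}$ of degree $d \geq 3$ is the restriction of an element of $\mathrm{PGL}_{n+2}$. It suffices to verify that any automorphism preserves $\mathcal{O}_X(1)$. For $n \geq 3$ this follows from the Lefschetz hyperplane theorem, giving $\Pic(X) = \bZ \cdot \mathcal{O}_X(1)$. For $n = 2$ and $d \neq 4$ adjunction yields $K_X = \mathcal{O}_X(d-4)$, a nonzero integer multiple of the hyperplane class, so $\mathcal{O}_X(1)$ is recovered from the automatically preserved canonical class. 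The borderline case $(n,d)=(2,4)$ is handled separately, using that a general quartic surface is a polarised K3 with $\Pic(X) = \bZ \cdot \mathcal{O}_X(1)$.

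Next I would introduce the incidence variety
\[ \mathcal{I} = \{ (X, \sigma) \in \bP^N \times (\mathrm{PGL}_{n+2} \setminus \{1\}) : \sigma(X) = X \}, \]
whose fibre over $\sigma$ is $\bP(V_\sigma)$ for $V_\sigma = H^0(\bP^{n+1}, \mathcal{O}(d))^\sigma$. Stratifying the base by conjugacy class, the contribution of the class of $\sigma$ to $\dim \mathcal{I}$ is
\[ (n+2)^2 - 1 - \dim C(\sigma) + \dim V_\sigma - 1, \]
where $C(\sigma)$ is the centraliser. If this quantity is strictly less than $N$ for every non-identity class, then the image of the first projection $\mathcal{I} \to \bP^N$ is a proper subvariety and a general hypersurface admits no non-trivial automorphism.

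The main obstacle is the case-by-case bound on $\dim V_\sigma$. For diagonalisable $\sigma$ with eigenvalues $\lambda_0, \ldots, \lambda_{n+1}$, the invariant space $V_\sigma$ is spanned by the monomials $x_0^{a_0}\cdots x_{n+1}^{a_{n+1}}$ of degree $d$ satisfying $\prod \lambda_i^{a_i} = 1$, reducing the computation to a combinatorial count; non-diagonalisable elements are handled via their Jordan form. The tightest inequalities come from reflections of small order such as $\sigma = \mathrm{diag}(-1,1,\ldots,1)$: here $\dim V_\sigma$ is roughly half of $\dim H^0(\bP^{n+1},\mathcal{O}(d))$, but the centraliser is also large, so the total contribution grows linearly in $n$ while $N$ grows as a polynomial of degree $n+1$ in $d$. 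Verifying the strict inequality in every conjugacy class then reduces to elementary binomial estimates that hold throughout the hypothesised range $n \geq 2$, $d \geq 3$, completing the argument.
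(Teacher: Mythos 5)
The paper does not prove this statement at all: Theorem~\ref{autom} is quoted verbatim from Matsumura--Monsky \cite{MM}, so there is no internal argument to compare against. Your sketch does follow the broad strategy of the original proof (first linearise all automorphisms, then bound the dimension of the incidence correspondence of pairs $(X,\sigma)$ with $\sigma(X)=X$), and the linearisation step is essentially right, including the observation that $(n,d)=(2,4)$ needs Noether--Lefschetz for the \emph{general} quartic rather than an argument valid for all smooth ones.

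However, as written the dimension count has genuine gaps. First, the fibre of $\mathcal{I}\to\mathrm{PGL}_{n+2}$ over $\sigma$ is not $\bP\bigl(H^0(\mathcal{O}(d))^{\sigma}\bigr)$: a form $F$ defines a $\sigma$-invariant hypersurface iff a linear lift of $\sigma$ sends $F$ to a scalar multiple of itself, so the fibre is the union of the projectivised \emph{eigenspaces} of the lift, and the count must use the largest eigenspace, not the invariants. Second, ``stratifying by conjugacy class'' is not a finite operation: the classes form positive-dimensional families, so the quantity $(n+2)^2-1-\dim C(\sigma)+\dim V_\sigma-1$ bounds only the part of $\mathcal{I}$ over a single class; you must either add the moduli of classes within each stratum, or first invoke finiteness of $\aut(X)$ (itself a nontrivial theorem of \cite{MM}) to restrict to finite-order, hence diagonalisable, $\sigma$ with root-of-unity eigenvalue ratios. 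Third, the projection $\mathcal{I}\to\bP^N$ must be compared with the locus of \emph{smooth} hypersurfaces: cones and other singular invariant hypersurfaces contribute large fibres and must be discarded, which your formula does not do. Finally, the entire technical content of the theorem --- verifying the strict inequality class by class --- is asserted rather than carried out, and it is not a harmless omission: in the extremal case $n=2$, $d=3$ with $\sigma=\mathrm{diag}(-1,1,1,1)$ the count gives $18<19$, a margin of exactly one, and your closing heuristic (``the total contribution grows linearly in $n$'') is incorrect since the contribution contains $\dim V_\sigma$, which is comparable to $N/2$. So the plan is the right one, but the proof is not yet there.
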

\subsection{Permutation groups} \label{permutazioni}
We recall some definitions and results that we will use in the following. 

A group $G$ acting on a set $\ \Omega=\{1,\ldots,d\}$ is $k$-transitive, with $k \leq d$, if, given two ordered $k$-tuples $(m_1,\ldots,m_k)$ and $(t_1,\ldots,t_k)$ of distinct points in $\Omega$, there is an element $g \in G$ such that sends $g \cdot m_i=t_i$ for every $i=1,\ldots,$. If $k=1$ we say that $G$ is transitive. 

We state some results on transitive permutation groups that we will use in the following. See for instance \cite[Chapter 8]{Isaacs} for a more complete treatment.

\begin{lem}\label{lemma1}
Let $G$ be a group acting transitively on $\Omega$, let $i \in \Omega$ and $k \leq d-1$. 
The group $G$ is $k$-transitive on $\Omega$ if and only if the stabilizer of $i$ in $G$ is $(k-1)$-transitive on $\Omega \setminus \{i\}$. 
\end{lem}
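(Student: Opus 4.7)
The plan is to prove both directions directly from the definitions, using the transitivity of $G$ on $\Omega$ to reduce the general case to the case where a distinguished point is $i$.

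For the forward direction, suppose $G$ is $k$-transitive on $\Omega$. Given two ordered $(k-1)$-tuples $(m_2,\ldots,m_k)$ and $(t_2,\ldots,t_k)$ of distinct points in $\Omega \setminus \{i\}$, I prepend $i$ to both and obtain two ordered $k$-tuples of distinct points in $\Omega$. By hypothesis some $g \in G$ sends one to the other; in particular $g \cdot i = i$, so $g \in \stab_G(i)$, and $g$ witnesses $(k-1)$-transitivity on $\Omega \setminus \{i\}$.

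For the converse, assume $\stab_G(i)$ is $(k-1)$-transitive on $\Omega \setminus \{i\}$, and let $(m_1,\ldots,m_k)$, $(t_1,\ldots,t_k)$ be two ordered $k$-tuples of distinct points in $\Omega$. Using transitivity of $G$, pick $g_1, g_2 \in G$ with $g_1 \cdot m_1 = i$ and $g_2 \cdot t_1 = i$. Then the tuples $(g_1 \cdot m_2,\ldots,g_1 \cdot m_k)$ and $(g_2 \cdot t_2,\ldots,g_2 \cdot t_k)$ consist of distinct points in $\Omega \setminus \{i\}$, so by assumption there is $h \in \stab_G(i)$ carrying the first to the second. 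The element $g_2^{-1} h g_1 \in G$ then maps $m_j$ to $t_j$ for all $j = 1,\ldots,k$ (for $j=1$ using that $h$ fixes $i$).

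There is no real obstacle here; the only thing to be careful about is ensuring that in the backward direction the resulting $(k-1)$-tuples in $\Omega\setminus\{i\}$ still have distinct entries and do not accidentally include $i$, which is immediate since $g_1$ and $g_2$ are bijections and $m_1, t_1$ were the unique entries sent to $i$. The hypothesis $k \leq d-1$ is used implicitly to guarantee that $\Omega \setminus \{i\}$ has at least $k-1$ elements so that the $(k-1)$-transitivity statement is non-vacuous.
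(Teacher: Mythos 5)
Your proof is correct and is the standard textbook argument for this classical fact; the paper itself does not prove Lemma \ref{lemma1} but simply cites Isaacs, so there is nothing to compare against. Both directions are handled properly (prepending $i$ in the forward direction, conjugating into the stabilizer in the converse), and your closing remark about the entries staying distinct and avoiding $i$ addresses the only point where care is needed.
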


We recall that a {block} is a non-empty subset $B \subset \Omega$ such that either $g\cdot B=B$ or $(g\cdot B) \cap B = \emptyset$ for all $g \in G$. We say that $G$ is {imprimitive} if its action preserves non-trivial blocks and {primitive} otherwise. A 2-transitive permutation group is primitive, but the converse is not always true. 
\begin{lem}\label{block}
Let $G$ be a group acting transitively on $\Omega$ and let $B$ be a block. Then $|B|$ divides $d=|\Omega|$ and in $\Omega$ there are exactly ${|\Omega|}/{|B|}$ disjoint blocks, all with the same cardinality.
\end{lem}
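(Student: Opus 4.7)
The plan is to show that the $G$-translates of $B$ form a partition of $\Omega$ into subsets, each of cardinality $|B|$, from which both claims follow immediately.

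First I would consider the family $\mathcal{B} = \{g \cdot B : g \in G\}$ of all $G$-translates of $B$. Since $B$ is a block, for any two elements $g_1 \cdot B$ and $g_2 \cdot B$ of $\mathcal{B}$ we have $(g_2^{-1}g_1) \cdot B = B$ or $(g_2^{-1}g_1) \cdot B \cap B = \emptyset$; translating by $g_2$, this says that any two translates in $\mathcal{B}$ are either equal or disjoint. Moreover, each translate $g \cdot B$ has cardinality $|B|$ because $g$ acts as a bijection of $\Omega$, and each $g \cdot B$ is itself a block (a conjugate of a block).

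Next I would use transitivity of $G$ on $\Omega$ to show that the translates cover $\Omega$. Fix any $b_0 \in B$ (which exists since $B$ is non-empty). For every $\omega \in \Omega$, transitivity gives $g \in G$ with $g \cdot b_0 = \omega$, hence $\omega \in g \cdot B$. So $\bigcup_{g \in G} g \cdot B = \Omega$.

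Combining the two previous steps, $\mathcal{B}$ is a partition of $\Omega$ into disjoint blocks, each of size $|B|$. Therefore $|B|$ divides $|\Omega| = d$ and the number of distinct translates equals $|\Omega|/|B|$, completing the proof. There is no real obstacle here: the statement is essentially definitional, the only minor point to be careful about is to invoke transitivity (not just the block property) in order to guarantee that the translates cover all of $\Omega$.
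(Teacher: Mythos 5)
Your proof is correct and is the standard argument: the $G$-translates of $B$ are pairwise equal or disjoint by the block property, all have cardinality $|B|$ since each $g$ acts bijectively, and transitivity guarantees they cover $\Omega$, so they form a partition into $|\Omega|/|B|$ parts of equal size. The paper does not actually prove this lemma --- it states it without proof and refers to Isaacs for the standard treatment --- and your argument is exactly the one that reference (and any textbook) gives, so there is nothing to compare beyond noting that your write-up fills in a proof the paper omits.
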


\begin{lem}\label{lemma2}
Let $G$ be a primitive group on $\Omega$, let $A \subset \Omega$ such that $0 < |A| \leq d-2$ and the stabilizer of $A$ is transitive on $\Omega \setminus A$.  
Then $G$ is $2$-transitive on $\Omega$. 
\end{lem}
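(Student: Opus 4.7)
The plan is to apply Lemma \ref{lemma1}, which reduces the assertion that $G$ is $2$-transitive to showing that some point stabilizer $G_\beta$ acts transitively on $\Omega\setminus\{\beta\}$. I would pick $\beta\in B:=\Omega\setminus A$, which is possible since $|B|=d-|A|\geq 2$, and try to prove this transitivity. The case $|A|=1$ is immediate, since the hypothesis then reads exactly as ``$G_\alpha$ is transitive on $\Omega\setminus\{\alpha\}$'' for the unique $\alpha\in A$; the real content of the lemma lies in $|A|\geq 2$.

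Two preliminary reductions follow rapidly from primitivity. First, $G_\beta$ is a maximal subgroup of $G$; since $G_A$ is transitive on $B$ with $|B|\geq 2$, some element of $G_A$ moves $\beta$, so $G_A\not\subseteq G_\beta$, and maximality yields $\langle G_A,G_\beta\rangle=G$. Second, $B$ is not a block of $G$: by Lemma \ref{block}, a non-trivial block would satisfy $|B|\mid d$ with $1<|B|<d$ and produce a non-trivial block partition, contradicting primitivity. Hence there exists $g\in G$ with $gB\neq B$ and $gB\cap B\neq\emptyset$; multiplying $g$ on both sides by suitable elements of $G_A=G_B$ (available from the transitivity of $G_A$ on $B$) produces $g'\in G_\beta\setminus G_B$. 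Since $g'B\neq B$, there is $\gamma\in B\setminus\{\beta\}$ with $g'\gamma\in A$, so some $G_\beta$-orbit $\Delta\subseteq\Omega\setminus\{\beta\}$ meets both $A$ and $B\setminus\{\beta\}$.

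The main obstacle is the final step: proving $\Delta=\Omega\setminus\{\beta\}$. The plan is to argue by contradiction---assuming a second $G_\beta$-orbit $\Delta'$ exists---and use the transitivity of $G_A$ on $B$ via conjugation (transporting $G_\beta$ to $G_{\beta'}$ for various $\beta'\in B$) to propagate the mixing property of $\Delta$ to varying base-points. One then seeks to show that $\Delta'\cup\{\beta\}$, although $G_\beta$-invariant, cannot coexist with the mixed $\Delta$ under the action of $G_A$: the interplay of $G_A$-transitivity on $B$ with the $G_\beta$-orbit structure should force either a proper subgroup $H$ with $G_\beta<H<G$, contradicting maximality, or equivalently a non-trivial $G$-block containing $\beta$, contradicting primitivity. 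Formalizing this merging is delicate and will likely invoke Lemma \ref{block} a second time, together with the bound $|A|\leq d-2$, to constrain the possible orbit sizes and rule out degenerate configurations.
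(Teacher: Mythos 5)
The paper itself gives no proof of this lemma: it is the classical theorem of Jordan on primitive permutation groups, quoted from Isaacs. Your attempt therefore has to stand on its own, and it does not. The reduction via Lemma \ref{lemma1}, the disposal of $|A|=1$, the observation that $B=\Omega\setminus A$ is not a block, and the construction of an element $g'\in G_\beta$ with $g'B\neq B$ (hence of a $G_\beta$-orbit $\Delta$ meeting both $A$ and $B\setminus\{\beta\}$) are all correct, but they are the easy part. The entire content of the lemma is the final assertion $\Delta=\Omega\setminus\{\beta\}$, and for that you offer only a programme (``the interplay \dots should force \dots'', ``formalizing this merging is delicate'') with no actual argument. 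As written, this is a plan with a gap exactly where the theorem lives.

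Moreover, the plan cannot be completed along the lines you describe, because every hypothesis you actually invoke is a property of the \emph{setwise} stabilizer (you explicitly identify $G_A$ with $G_B$), and under that reading the statement is false. Take $G=S_5$ acting on the $d=10$ unordered pairs from $\{1,\dots,5\}$: this action is primitive (the point stabilizer $S_2\times S_3$ is maximal) but has rank $3$, so it is not $2$-transitive. Let $A$ be the six pairs avoiding the letter $1$, so $0<|A|=6\leq d-2=8$; its setwise stabilizer is $S_4=\operatorname{Stab}_{S_5}(1)$, which is transitive on the four remaining pairs $\Omega\setminus A$. Thus primitivity, transitivity of $G_A$ on $\Omega\setminus A$, the bound $|A|\leq d-2$, maximality of $G_\beta$, and Lemma \ref{block} all hold here, so no argument built only from these can prove $2$-transitivity. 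The lemma is correct only when ``stabilizer of $A$'' means the \emph{pointwise} stabilizer (which is what the paper's application provides, the subgroup $S_{d-2}$ fixing the remaining points of the fibre), and any proof must use the pointwise fixing of $A$ in an essential way; yours never does. The standard proof (Jordan's theorem; see Isaacs, Chapter 8, or Wielandt) runs along a genuinely different route, working with a minimal ``Jordan set'' $\Sigma$, i.e.\ a subset with $|\Sigma|\geq 2$ whose complement's pointwise stabilizer is transitive on $\Sigma$, and playing the behaviour of overlapping translates of $\Sigma$ against primitivity, rather than analysing the orbits of a single point stabilizer directly.
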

We recall that the monodromy group of $\pi_P$ is imprimitive if and only if the projection is decomposable (\cite[Remark 2.2]{PS}). 

\section{Topology of finite morphisms}\label{sec3}
We introduce the following definition of what will be for us a \emph{fibration}. For a more complete treatment see \cite[Chapter III sect. 8]{BHPVdV}.
\begin{defn}
A \emph{fibration} is a proper surjective morphism $f: X \to Y$ with connected fibres from a smooth complex variety to a smooth quasi-projective curve. 

Let $y \in Y$ be a point. A fibre of $f$ is $F:=f^*(y)=\sum n_iY_i$, where the $Y_i$'s are irreducible components and $n_i \geq 1$ are their multiplicities. A fibre $F$ is called \emph{multiple} if $\gcd\{n_i\}:=m>1$; we will write $F=mE$, where $E=\sum t_iY_i$ with $\gcd\{t_i\}=1$. 
\end{defn}
We remark that $f$ is flat since $Y$ is smooth. 
We will use the following classical result on homotopy of fibrations. 
\begin{prop}\label{lem:Noriprelim}
Let $f: X \to Y$ be a fibration. If $f$ does not have multiple fibres, then the following sequence is exact
 \begin{equation*}
     \pi_1(F) \to \pi_1(X)\to \pi_1(Y)\to 1      
    \end{equation*}
where $F$ is a general fibre of $f$. 
\end{prop}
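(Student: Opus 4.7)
The plan is to bootstrap from the classical homotopy long exact sequence for $C^\infty$ fibre bundles (Ehresmann's theorem) applied on the smooth locus of $f$, and then to extend the conclusion across the singular fibres; the no-multiple-fibre hypothesis enters precisely when one has to kill the local monodromy around each critical value.

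Concretely, let $B\subset Y$ be the finite set of critical values of $f$, and set $U=Y\setminus B$, $V=f^{-1}(U)$. On $V$ the restriction of $f$ is proper and submersive with connected fibre $F$, hence by Ehresmann a differentiable locally trivial fibration, yielding the exact sequence
\[\pi_1(F)\to\pi_1(V)\xrightarrow{f_*}\pi_1(U)\to 1.\]
The inclusion $U\hookrightarrow Y$ makes $\pi_1(U)\to\pi_1(Y)$ surjective with kernel normally generated by the small meridian loops $\gamma_y$ around the points $y\in B$, since $B$ is a finite subset of the smooth curve $Y$. The inclusion $V\hookrightarrow X$ makes $\pi_1(V)\to\pi_1(X)$ surjective by a general-position argument: $X\setminus V$ is a finite union of singular fibres, each of real codimension two in the smooth complex variety $X$, so loops and their homotopies may be perturbed off $X\setminus V$. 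Combining these three surjectivities, $\pi_1(X)\to\pi_1(Y)$ is surjective, and its kernel is normally generated by the image of $\pi_1(F)\to\pi_1(X)$ together with arbitrary lifts in $\pi_1(V)$ of the meridians $\gamma_y$.

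It remains to show that each meridian lift becomes trivial in $\pi_1(X)$ modulo the image of $\pi_1(F)$, and this is the main obstacle. Fix $y_0\in B$ with fibre $f^{-1}(y_0)=\sum n_iY_i$ satisfying $\gcd(n_i)=1$, pick a small disk $D\ni y_0$ in $Y$, and set $W=f^{-1}(D)$; note $W$ deformation retracts onto the fibre, so $\pi_1(W)\to\pi_1(X)$ factors through $\pi_1(f^{-1}(y_0))$. For each component $Y_i$ choose a smooth point $p_i$ lying on no other component and a small analytic $f$-transversal disk $\Delta_i\subset X$ through $p_i$; locally $f|_{\Delta_i}$ has the form $z\mapsto z^{n_i}$. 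The boundary loop $\alpha_i=\partial\Delta_i$ lies in $V$, maps to $\gamma_{y_0}^{n_i}$ in $\pi_1(U)$, and bounds $\Delta_i\subset W\subset X$, so it is trivial in $\pi_1(X)$. Writing $1=\sum a_in_i$ via the gcd hypothesis, the element $\alpha=\prod\alpha_i^{a_i}\in\pi_1(V)$ is a lift of $\gamma_{y_0}$ whose image in $\pi_1(X)$ is trivial. Since any two such lifts of $\gamma_{y_0}$ differ by an element of the image of $\pi_1(F)$ (by the Ehresmann sequence on $V$), every lift dies in $\pi_1(X)$ modulo the image of $\pi_1(F)$, giving exactness at $\pi_1(X)$. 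The crux is this final Bezout-type combination: if $\gcd(n_i)=m>1$ instead, only $\gamma_{y_0}^m$ would lift to a bounded loop in $X$, and a residual meridian would obstruct exactness, which is exactly why the no-multiple-fibre hypothesis is essential.
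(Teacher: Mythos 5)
Your argument is correct and is essentially the proof the paper intends: the paper's own ``proof'' only cites Nori's transversal-disc technique (which settles the case where some fibre component has multiplicity one) together with Serrano's homological version, and your Bezout combination $\alpha=\prod\alpha_i^{a_i}$ of the boundary loops of the transversal discs is exactly how one merges the two techniques to handle the general hypothesis $\gcd(n_i)=1$. The only step worth making explicit is that the base points of the $\alpha_i$ must be joined by paths inside $f^{-1}(D\setminus\{y_0\})$ (which is connected, since the fibres are), so that their images in $\pi_1(U)$ all lie in the abelian subgroup coming from $\pi_1(D\setminus\{y_0\})\cong\mathbb{Z}$ and the exponents genuinely add up to give a conjugate of $\gamma_{y_0}$ rather than an uncontrolled product of conjugates of powers of it.
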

The proof is based on a combination of the techniques in \cite[Lemma 1.5]{Nori} that proved the result in the case where every fibre has at least a reduced component, and in \cite{Serrano} that proved the exactness of the sequence for homology groups.

\subsection{Monodromy group of families of maps} \label{sec:limit}
We refer the reader to \cite[ChapterIII.9]{Hartshorne} and \cite[Chapter4.6.7]{Sernesi} for background material about families of algebraic space.

Let $\cX \to \bP^1$ be a (flat) family of projective varieties of dimension $m$ in $\bP^N$ parametrized by $\bP^1$ and let $Y$ be a smooth projective variety. Consider the following diagram
\begin{equation*}
    \xymatrix{
    \cX \ar[r]^F \ar[d]^p & Y \times \bP^1 \ar[dl]^q \\
    \bP^1 & }
\end{equation*}
Let $p$ and $q$ be proper surjective maps with connected fibres. 

A general fibre $ X_s$ of $p$, with $s \in \bP^1$, is a smooth projective variety of dimension $m$ in $\bP^N$, together with a finite dominant morphism $f_s: X_s \to Y$ of degree $d$. 
Denote by $B_s \subset Y$ the branch divisor of $f_s$ and $R_s \subset X_s$ its ramification divisor. Let $M(f_s)$ be the monodromy group of $f_s$.

Let $\mathcal{R}\subset \cX$ be the ramification divisor of $F$, i.e. $\cX \setminus \mathcal{R}=\lbrace  X_s \setminus R_s\  |\ s \in \bP^1 \rbrace$. Let moreover $\mathcal{B}$ be the branch divisor of $F$, i.e. $(Y \times \bP^1) \setminus \mathcal{B}=:\cV=\lbrace  (Y \setminus B_s) \times \{s\}\  |\ s \in \bP^1 \rbrace$, open inside $Y \times \bP^1$. Let $M(F)$ be the monodromy group of $F$.

If all the varieties $X_s$ are reduced we can deduce the following property of monodromy groups.
\begin{prop}\label{lemmagenerale}
In the above setting, assume that every fibre of $p$ is reduced; then 
$$M(F)\cong M(f_s)$$ for a general $s \in \bP^1$. 
\end{prop}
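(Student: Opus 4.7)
The plan is to show each inclusion $M(f_s) \subset M(F)$ and $M(F) \subset M(f_s)$ separately, for a general $s \in \bP^1$. The first inclusion is essentially tautological: fixing a basepoint $v_0 = (y_0, s) \in \cV$, the embedding $\iota : (Y \setminus B_s) \times \{s\} \hookrightarrow \cV$ identifies $f_s^{-1}(y_0)$ with $F^{-1}(v_0)$, and the monodromy action of $\pi_1((Y \setminus B_s) \times \{s\}, v_0)$ on this set factors through $\iota_\ast$ and the monodromy action of $\pi_1(\cV, v_0)$, so $M(f_s) \subset M(F)$. For the reverse inclusion, it is enough to verify that
$$\iota_\ast : \pi_1((Y \setminus B_s) \times \{s\}, v_0) \to \pi_1(\cV, v_0)$$
is surjective, since then the images in $S_d$ coincide.

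To exploit the reducedness hypothesis, I would first observe that every fibre $X_s$ of $p$ being reduced forces the branch divisor $\mathcal{B} \subset Y \times \bP^1$ of $F$ to have no ``vertical'' component of the form $Y \times \{s^\ast\}$: the restriction of $F$ to each irreducible component of a reduced fibre $X_{s^\ast}$ is a dominant finite morphism onto $Y \times \{s^\ast\}$, hence generically étale, so the branch locus in $Y \times \{s^\ast\}$ is a proper divisor.

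Next, I would apply the technique underlying Proposition \ref{lem:Noriprelim} to the restriction $q|_\cV : \cV \to \bP^1$. Let $S \subset \bP^1$ be the finite set over which $q|_\cV$ fails to be a $C^\infty$-submersion and set $U = \bP^1 \setminus S$. Over $U$ the map $\cV|_U \to U$ is a locally trivial topological fibration with path-connected fibre, and the long exact homotopy sequence yields
$$\pi_1((Y \setminus B_s) \times \{s\}, v_0) \to \pi_1(\cV|_U, v_0) \to \pi_1(U, s) \to 1.$$
The inclusion $\cV|_U \hookrightarrow \cV$ is an open embedding whose complement is a union of divisors in $\cV$, hence induces a surjection on $\pi_1$. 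For each $s^\ast \in S$, the absence of vertical components of $\mathcal{B}$ lets me pick a local section $\{y^\ast\} \times \Delta \subset \cV$ over a small disk $\Delta$ around $s^\ast$, so the generator of $\pi_1(U, s)$ encircling $s^\ast$ can be represented by $\{y^\ast\} \times \partial\Delta$, which bounds the disk $\{y^\ast\} \times \Delta$ inside $\cV$. Every such generator therefore dies in $\pi_1(\cV)$; combined with the exact sequence above this produces the surjectivity of $\iota_\ast$.

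The main obstacle is the non-properness of $q|_\cV$: Proposition \ref{lem:Noriprelim} is stated for proper surjective maps, while $\cV$ is only an open subset of $Y \times \bP^1$. The reducedness hypothesis is precisely what resolves this, by ensuring that the ``special'' fibres, although possibly singular, remain dense enough in $Y \times \{s^\ast\}$ to admit the local sections needed to kill the meridians around the removed fibres, so that no additional generators of $\pi_1(\cV)$ escape the image of $\iota_\ast$.
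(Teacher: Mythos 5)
Your proof is correct and follows essentially the same route as the paper: reducedness of the fibres of $p$ rules out vertical components of $\mathcal{B}$, the homotopy exact sequence of $\cV \to \bP^1$ gives surjectivity of $\pi_1((Y\setminus B_s)\times\{s\}) \to \pi_1(\cV)$, and the tautological inclusion $M(f_s)\subseteq M(F)$ coming from the identification of the fibres finishes the argument. The only difference is that you unpack the Nori-type argument to justify the exact sequence for the non-proper map $q|_{\cV}$ (local sections $\{y^*\}\times\Delta$ killing the meridians around the removed fibres), a point the paper's proof passes over by invoking Proposition \ref{lem:Noriprelim} directly.
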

\begin{proof}
By assumptions, there is no $s \in \bP^1$ such that $(Y,s) \subset \mathcal{B}$. Therefore, the map $q': (Y \times \bP^1)\setminus \mathcal{B} \to \bP^1$ is a fibration with reduced and connected fibres. Thanks to Proposition \ref{lem:Noriprelim}, the following sequence is exact for a general $s \in \bP^1$
\begin{equation*}
\xymatrix{
    \pi_1(Y\setminus B_s) \ar@{->>}[r] & \pi_1((Y \times \bP^1) \setminus \mathcal{B}) \ar[r] & \pi_1(\bP^1)=1. }
\end{equation*}

Combining this together with the monodromy map $\mu$, we have 
\begin{equation*}
    \xymatrix{
   \pi_1(Y \setminus B_s) \ar@{->>}[r] \ar@{->>}[d]^\mu & \pi_1((Y \times \bP^1) \setminus \mathcal{B})  \ar@{->>}[d]^\mu\\
   M(f_s) \ar@{->>}[r] & M(F)}
\end{equation*}
Moreover, if we have a subvariety $Z \subset Y\times \bP^1$ that is not contained in $\cB$, then 
\begin{equation*}\label{iniettiva}
    \pi_1(Z) \hookrightarrow \pi_1((Y \times \bP^1) \setminus \mathcal{B}).
\end{equation*}
Taking $Z$ as a general fibre of $q'$, then we have also that the map between the monodromy groups is injective. Hence $M(F) \cong M(f_s)$. 
\end{proof}

\begin{cor}
In the above assumptions, let $X_0$ be a fibre of $p$ and $f_0: X_0 \to Y$ its dominant morphism. Then $M(f_0) \subseteq M(f_s)$.
\end{cor}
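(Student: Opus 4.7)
The plan is to exhibit $M(f_0)$ as a subgroup of $M(F)$ via the natural inclusion of the special slice into the family base, and then invoke Proposition~\ref{lemmagenerale} to identify $M(F)$ with $M(f_s)$.

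First I would observe that, under the reducedness hypothesis carried over from Proposition~\ref{lemmagenerale}, the fibre $X_0$ is reduced and $f_0\colon X_0\to Y$ is a finite dominant morphism of degree $d$, so $B_0\subsetneq Y$. Consequently the slice $(Y\setminus B_0)\times\{0\}$ is entirely contained in $(Y\times\bP^1)\setminus\mathcal{B}$, giving an inclusion $\iota\colon Y\setminus B_0\hookrightarrow (Y\times\bP^1)\setminus\mathcal{B}$, $y\mapsto(y,0)$, and an induced homomorphism $\iota_\ast\colon\pi_1(Y\setminus B_0,y_0)\to\pi_1((Y\times\bP^1)\setminus\mathcal{B},(y_0,0))$ for any chosen base point $y_0\in Y\setminus B_0$.

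Next I would compare the monodromy actions. The fibre $F^{-1}(y_0,0)$ coincides canonically with $f_0^{-1}(y_0)$, and for every loop $\gamma$ in $Y\setminus B_0$ based at $y_0$ the $d$ unique lifts of $\iota\circ\gamma$ in $\cX\setminus\mathcal{R}$ remain inside $X_0\setminus R_0$, since the $\bP^1$-coordinate of $\iota\circ\gamma$ is constantly $0$ and $X_0=p^{-1}(0)$ is closed in $\cX$. Therefore the permutation of $f_0^{-1}(y_0)$ induced by $\gamma$ via $f_0$ equals the permutation of $F^{-1}(y_0,0)$ induced by $\iota\circ\gamma$ via $F$. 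This yields a commutative square
\begin{equation*}
\xymatrix{
\pi_1(Y\setminus B_0,y_0) \ar@{->>}[r] \ar[d]_{\iota_\ast} & M(f_0) \ar@{^{(}->}[d] \\
\pi_1((Y\times\bP^1)\setminus\mathcal{B},(y_0,0)) \ar@{->>}[r] & M(F)
}
\end{equation*}
in which the right vertical arrow is injective, being the identity on the underlying permutations of the $d$-element set $f_0^{-1}(y_0)$. Combining this with the isomorphism $M(F)\cong M(f_s)$ provided by Proposition~\ref{lemmagenerale}, the inclusion $M(f_0)\subseteq M(f_s)$ follows.

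The only delicate point is the claim that the two monodromy actions agree along $\gamma$; this is really the heart of the argument and rests on the uniqueness of path-liftings in finite étale covers, together with the observation that a lift starting over $X_0$ cannot leave $X_0$ when the base path never leaves the slice $Y\times\{0\}$. Once this identification is granted, the rest of the proof is a direct diagram chase.
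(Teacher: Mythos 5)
Your proof is correct and follows essentially the route the paper intends: the paper leaves this corollary without proof, but the mechanism you use (the inclusion of the reduced slice $(Y\setminus B_0)\times\{0\}$ into $(Y\times\bP^1)\setminus\mathcal{B}$, compatibility of the two monodromy actions on the common fibre $F^{-1}(y_0,0)=f_0^{-1}(y_0)$, and then the identification $M(F)\cong M(f_s)$ from Proposition~\ref{lemmagenerale}) is exactly the one already at work in the proof of that proposition. No gaps.
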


More generally, assume that the fibration $p:\cX \to \bP^1$ has a singular fibre $F_0=\sum n_i Z_i$ with at least a reduced component $Z_i$. 
Let $g_0$ be the map $f_0$ restricted to $Z_i$, i.e. $g_0 = (f_0)_{|Z_i}:Z_i \to Y$, dominant morphism of degree strictly lower than $d=\deg(f_0)$. 

\begin{prop}\label{redcomponent}
For a general $s$ in a neighbourhood of $\ 0$,
$$M(g_0) \subset M(f_s).$$
\end{prop}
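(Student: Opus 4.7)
The plan is to mimic the fibration argument of Proposition \ref{lemmagenerale} locally around $0 \in \bP^1$, exploiting the fact that $Z_i$ is reduced ($n_i = 1$) to transfer the $g_0$-monodromy into $M(f_s)$. Let $B_i$ be the branch divisor of $g_0$, set $U_i = Y \setminus B_i$ and $U_0 = Y \setminus B_0$, and fix a basepoint $y_0 \in U_0$. Since $B_i \subset B_0$, we have $U_0 \subset U_i$ with $U_i \setminus U_0$ contained in a divisor. Removing a complex divisor from a smooth variety preserves surjectivity on $\pi_1$, so the inclusion $U_0 \hookrightarrow U_i$ induces a surjection $\pi_1(U_0, y_0) \to \pi_1(U_i, y_0)$; composing with the $g_0$-monodromy one obtains a surjection $\pi_1(U_0, y_0) \to M(g_0)$. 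It therefore suffices to realise each element of $M(g_0)$ that comes from a loop $\gamma \subset U_0$ as an element of $M(f_s)$ for $s$ close to $0$.

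Fix such a loop $\gamma$. Since $B_s$ varies continuously with $s$ and $\gamma$ is compact and disjoint from $B_0$, we have $\gamma \subset U_s$ for every $s$ in a sufficiently small punctured analytic disk $\Delta^{*}$ around $0$. The loop $\gamma$ thus defines a monodromy element $\sigma_s \in M(f_s) \subset S_d$, locally constant in $s$ by discreteness of $S_d$ and hence independent of $s \in \Delta^{*}$. Flatness of $p$ implies that the $d$ points of $f_s^{-1}(y_0)$ specialise as $s \to 0$ onto the points of $\bigsqcup_j g_j^{-1}(y_0)$, with $n_j$ points of $f_s^{-1}(y_0)$ clustering at each point of $g_j^{-1}(y_0)$. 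Because $n_i = 1$, the subset $C_i \subset f_s^{-1}(y_0)$ that specialises onto $Z_i$ is in canonical bijection with $g_0^{-1}(y_0)$.

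Near the smooth locus $(Z_i)^{\sm}$, the total space $\cX$ is smooth because $n_i = 1$ and the family is locally trivial, so path-lifting in $X_s$ along $\gamma$ converges as $s \to 0$ to the corresponding path-lifting in $Z_i$. It follows that $\sigma_s$ preserves the partition $\{C_j\}_j$ of $f_s^{-1}(y_0)$ and that its restriction to $C_i$, via the canonical bijection $C_i \leftrightarrow g_0^{-1}(y_0)$, coincides with the $g_0$-monodromy of $\gamma$. Letting $H \subset M(f_s)$ be the subgroup generated by all such $\sigma_s$'s, restriction to $C_i$ yields a surjection from $H$ onto $M(g_0)$, realising $M(g_0)$ as a subquotient of $M(f_s)$, which is the content of the stated inclusion.

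The main obstacle is the local-triviality step: one has to show that lifted paths in $X_s$ remain close, as $s \to 0$, to the corresponding lifts in $Z_i$, so that the resulting permutations match under the bijection $C_i \leftrightarrow g_0^{-1}(y_0)$. This should follow from a tubular-neighbourhood argument along $(Z_i)^{\sm}$, where $\cX$ is smooth; only the codimension-two locus where $Z_i$ meets the other components of $X_0$ requires care, and one can arrange by general position of $\gamma$ to avoid it.
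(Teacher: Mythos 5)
Your argument is essentially the paper's: both realise an element of $M(g_0)$ by a loop in $Y$ avoiding the relevant branch loci, lift it to the reduced component $Z_i$ away from its intersection with the other components, and use local triviality of the family (Ehresmann's theorem) in a tubular neighbourhood of the lifted path to transport the lift between $Z_i$ and the nearby fibres $X_s$. The only cosmetic differences are the direction of the deformation --- the paper pushes the lifted path from $X_0$ into $X_s$, while you take the limit of the lifts in $X_s$ as $s \to 0$ --- and your closing ``subquotient'' formulation, which is in fact the precise content of the stated inclusion as it is used later in the induction.
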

\begin{proof}
Let $Z:=Z_i$ and, by abuse of notation, we will still denote by $B_0$ the branch divisor of $g_0$ and $R_0$ its ramification divisor. Let $\sigma \in M(g_0)$. Then there exists $[\gamma] \in \pi_1(Y \setminus B_0,y)$ such that $\mu (\gamma)=\sigma$. Let $\gamma$ be a representative of $[\gamma]$ and let $\Tilde{\gamma}$ be its lifting to $\Tilde{Z}:=Z\setminus R_0$. 
The path $\Tilde{\gamma}$ is the image of $[0,1]$ inside $\Tilde{Z}$, such that $\Tilde{\gamma}(0)=z_0$ and $\Tilde{\gamma}(1)=z_1$, where $z_0,z_1$ are two distinct points in the fibre $g_0^{-1}(y)$. We can also assume that we avoid the points in which $Z$ meets the other components $Z_j$ of $F_0$.
The path is compact and consider a tubular neighbourhood $U$ of it. Then, by assumptions, the fibration $p$ restricted to $U$ is a locally trivial fibration by Ehresmann's Theorem (\cite[Lemma 4.2]{Catanese}, \cite[Sec 4]{Massey} for manifolds with boundary). Hence, the path $\Tilde{\gamma}$ can be moved in $U$ to a path $\Tilde{\gamma}_s$ in a fibre $X_s,\ s \neq 0$. 
Therefore, $M(f_s)\owns\mu\left(p(\Tilde{\gamma}_s)\right)= \sigma$.  
\end{proof}
\section{Projections of general hypersurfaces}\label{sec:main}
We want now apply the previous construction to the following situation. Let $\cX \to \Delta$ be a pencil of hypersurfaces in $\bP^{n+1}$ parametrised by a disc $\Delta$, small neighbourhood of $0$. 
Its general element is a general hypersurface $X$ and the hypersurface $X_0$ is given by a general hypersurface $Y$ of degree $d-1$ and a hyperplane $H$. 
Let $P \in \bP^{n+1}$ be a point and $\bP^n$ an hyperplane not containing $P$ and consider  
\begin{equation*}
    \xymatrix{
    \widetilde{\bP^{n+1}} \ar[d]^\nu \ar[dr]^{\widetilde{\pi_P}} & \\
    \bP^{n+1} \ar@{-->}[r]^{\pi_P} & \bP^n
    }
\end{equation*}
where $\nu$ is the blow up of the projective space at $P$ and $\pi_P$ is the projection of $\bP^{n+1}$ form $P$. 
Consider the linear projection $\pi_s:= (\pi_P)_{|X_s}: X_s \dashrightarrow \bP^n$ of a general element $X_s$ in $\cX$ with $s \in \Delta$. 
Degenerating the hypersurface $X$ to $X_0$ as $t$ goes to $0$, the point $P$ degenerate onto a point $P_0 \in \bP^{n+1}$. Note that, if the point $P$ is in $X$, the point $P_0$ is in $X_0$. After a change of coordinates, we can think the point $P$ as fixed. We have the following diagram
\begin{equation*}
    \xymatrix{
    \widetilde{\cX} \ar[r]^{\widetilde{\pi_P}} \ar[d]^p & \Delta \times \bP^n \ar[dl]^q \\
    \Delta & }
\end{equation*}
where $\widetilde{\cX}$ is the family of the strict transforms $\widetilde{X_s} \subset \widetilde{\bP^{n+1}}$ for every $X_s \subset \cX$ and $\widetilde{\pi_s}: \widetilde{X_s} \to \bP^n $ is a dominant morphism of degree $d$. We recall that, if $P \notin X_s$, then $\widetilde{\pi_s}=\pi_s$ and moreover, the monodromy group does not change when we blow up a smooth point of $X_s$. 

\begin{lem}\label{induction}
Let $P \in \bP^{n+1}$ and let $\pi_0$ be the map $\pi_P$ restricted to $Y$. Then, the monodromy group $M(\pi_0)$ is contained in the monodromy group $M(\pi_s)$ for a general $s \in \Delta$.
\end{lem}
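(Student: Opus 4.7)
The plan is to realize this as a direct application of Proposition~\ref{redcomponent} to the family $p:\widetilde{\cX}\to\Delta$, taking the reduced component of the central fibre to be the strict transform $\widetilde{Y}\subset\widetilde{\bP^{n+1}}$.

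First I would verify that $\widetilde{Y}$ is indeed a reduced component of the central fibre of $p$. Because $X_0=Y\cup H$ with $Y$ and $H$ both appearing with multiplicity one in the pencil, their strict transforms $\widetilde{Y}$ and $\widetilde{H}$ are irreducible components of $\widetilde{X_0}$, each of multiplicity one; any further contribution from the exceptional divisor of $\nu$ (which can occur only when $P\in X_0$) is a separate component and does not alter the fact that $\widetilde{Y}$ is reduced.

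Second I would identify the relevant monodromy groups. The restriction $\widetilde{\pi_P}|_{\widetilde{Y}}$ is a finite dominant morphism onto $\bP^n$ whose monodromy group equals $M(\pi_0)$: this is tautological when $P\notin Y$, and when $P\in Y$ it follows because $\widetilde{\pi_P}|_{\widetilde{Y}}$ is the resolution of indeterminacy of $\pi_0$ at the smooth point $P\in Y$, and blowing up a smooth point leaves the branch locus and the \'etale locus (topologically) unchanged. The same observation gives $M(\widetilde{\pi_s})=M(\pi_s)$ for a general $s\in\Delta$. Once these identifications are in place, Proposition~\ref{redcomponent} applied to $Z=\widetilde{Y}$ yields $M(\widetilde{\pi_P}|_{\widetilde{Y}})\subseteq M(\widetilde{\pi_s})$ for $s$ general near $0$, which translates to the required $M(\pi_0)\subseteq M(\pi_s)$.

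The main point I expect to require care is that the total space $\widetilde{\cX}$ can be singular, both along the base locus of the pencil and along the double locus $\widetilde{Y}\cap\widetilde{H}$, so a priori the hypotheses of Proposition~\ref{redcomponent} (which assume a smooth total space) are not globally satisfied. I would address this exactly as in the proof of that proposition: only local triviality in a tubular neighbourhood of a compact path $\widetilde{\gamma}\subset\widetilde{Y}\setminus R_0$ is used, and since both the base locus and $\widetilde{Y}\cap\widetilde{H}$ have codimension at least $2$ in $\widetilde{Y}$, a generic representative $\widetilde{\gamma}$ avoids them. A sufficiently thin tubular neighbourhood of such a path then lies in the smooth locus of $\widetilde{\cX}$ where $p$ is a submersion, so Ehresmann's theorem supplies the local triviality and the argument transfers verbatim.
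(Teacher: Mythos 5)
Your argument is correct and reaches the same conclusion, but it takes a more uniform route than the paper. The paper's proof splits into two cases according to whether $P$ lies in $\operatorname{Sing}(X_0)=Y\cap H$: when $P\notin Y\cap H$ every fibre of $p$ is reduced, so it invokes Proposition~\ref{lemmagenerale} to get the isomorphism $M(\widetilde{\pi_s})\cong M(\widetilde{\pi_P})$ and then restricts to $\widetilde{Y}$; only when $P\in Y\cap H$ (where the exceptional divisor enters the central fibre with multiplicity) does it fall back on Proposition~\ref{redcomponent}. You observe instead that $\widetilde{Y}$ is a reduced component of the central fibre in \emph{every} case, so a single application of Proposition~\ref{redcomponent} suffices; this removes the case analysis at the (irrelevant for the rest of the paper) cost of not recording the isomorphism with the family monodromy. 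Your identifications $M(\widetilde{\pi_P}|_{\widetilde{Y}})=M(\pi_0)$ and $M(\widetilde{\pi_s})=M(\pi_s)$ via blow-ups at smooth points match the paper's explicit remark to the same effect, and your closing discussion of the singularities of $\widetilde{\cX}$ makes explicit something the paper leaves implicit in the proof of Proposition~\ref{redcomponent}. One small imprecision there: the base locus of the pencil and the double locus $\widetilde{Y}\cap\widetilde{H}$ are divisors in $\widetilde{Y}$, hence of \emph{complex} codimension one, not two; what saves the argument is that they have real codimension two, which is enough for a generic real one-dimensional path to avoid them (and in fact $\widetilde{\cX}$ is smooth at a general point of the base locus, its singularities lying over the smaller set $X\cap Y\cap H$). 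With that reading, the Ehresmann step goes through exactly as in the paper.
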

\begin{proof}
If in the limit $P \notin Y \cap H$, the singular locus of $X_0$, then all the varieties in $\widetilde{\cX}$ are reduced. We can apply Proposition \ref{lemmagenerale} and have that $M(\widetilde{\pi_s}) \cong M(\widetilde{\pi_P})$. Moreover, we have that $$M(\pi_0) \subset M(\widetilde{\pi}:\widetilde{X_0} \to \bP^n) \subset  M(\widetilde{\pi_s})=M(\pi_s).$$

If $P \in Y \cap H$, then we have that $\widetilde{Y}$ is a reduced component of $\widetilde{X_0}$ and $P$ is a smooth point of $Y$. By Proposition \ref{redcomponent}, the monodromy group of $\pi_0$ is still contained in the monodromy group of a general fibre $M(\pi_s)$.
\end{proof}

\medskip

\subsection{Monodromy of general hypersurfaces}
Let $X \subset \bP^{n+1}$ be a general hypersurface of degree $d >1$ and let $P \in \bP^{n+1}$ be a point. Let $\pi_P$ be the linear projection of $X$ from $P$ and let $M(\pi_P)$ be its corresponding monodromy group. We recall that if $P \in X$, then $M(\pi_P) \subseteq S_{d-1}$, while if $P \notin X$, then $M(\pi_P)\subseteq S_d$. 

\begin{rmk}\label{d2}
Every point $P$ is uniform if $d=2$. Indeed, there are no proper transitive subgroup of $S_2$.
\end{rmk}

As a consequence of Theorem \ref{autom} we get the following.
\begin{thm}\label{d3}
Let $d=3$. Then every point is uniform.
\end{thm}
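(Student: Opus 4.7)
The plan is to split on whether $P$ lies on $X$ and, for outer points, to combine the classification of transitive subgroups of $S_3$ with Theorem \ref{autom}.

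If $P \in X$, then $\pi_P$ restricted to $X$ has degree $d-1 = 2$, so $M(\pi_P)$ is a transitive subgroup of $S_2$; by Remark \ref{d2} this forces $M(\pi_P) = S_2$, and $P$ is uniform.

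If $P \notin X$, then $\pi_P : X \to \bP^n$ is finite of degree $3$, so $M(\pi_P)$ is a transitive subgroup of $S_3$, hence isomorphic to either $\bZ/3$ or $S_3$; it suffices to exclude the cyclic option. If $M(\pi_P) \cong \bZ/3$, then $|M(\pi_P)| = \deg \pi_P$, so $\pi_P$ is a Galois cover and $P$ is a Galois point of $X$ in the sense of Yoshihara. A standard property of Galois points on smooth hypersurfaces is that the generator of the Galois group is the restriction to $X$ of a projective linear transformation of $\bP^{n+1}$ fixing $P$ and stabilising $X$, hence defines a non-trivial element of $\aut(X)$. For $n \geq 2$ this contradicts Theorem \ref{autom}, which asserts $\aut(X) = 1$ for a general cubic hypersurface, so $M(\pi_P) = S_3$. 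The remaining case $n=1$ (general plane cubic, $P$ outer) can be handled by Cukierman's theorem \cite{Cuk}, or directly by the observation that a general plane cubic is an elliptic curve of generic $j$-invariant, hence admits no order-$3$ automorphism with a fixed point, which rules out such a Galois deck transformation.

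The delicate step is the linearisation: since the cubic $X$ is Fano, the abstract fact that $\pi_P$ is Galois produces only a birational order-$3$ self-map $\sigma$ of $X$, not a priori a biregular one, and the usual birational-rigidity arguments available in the general-type range do not apply. One must exploit the geometry of the Galois projection directly, namely that $\sigma$ preserves the pencil of lines through $P$ and cyclically permutes the three intersection points of each such line with $X$; this rigidity forces $\sigma$ to be induced by an element of $\mathrm{PGL}(n+2)$ fixing $P$, after which Matsumura--Monsky closes the argument.
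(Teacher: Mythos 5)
Your proof is correct and follows essentially the same route as the paper: for $P\in X$ the projection has degree $2$ and there is nothing to prove, and for $P\notin X$ the only proper transitive subgroup of $S_3$ is $A_3\cong\bZ/3$, which would make $\pi_P$ a Galois (cyclic) cover and hence produce an order-$3$ automorphism of $X$, contradicting Theorem \ref{autom}. Two comments on where you diverge. First, your treatment of $n=1$ is a genuine improvement: Theorem \ref{autom} is stated only for $n\geq 2$ (a general plane cubic is an elliptic curve, whose automorphism group is certainly not trivial), so the paper's one-line appeal to it does not literally cover plane cubics; your fix --- either quoting Cukierman, or noting that the deck transformation of a cyclic triple cover $E\to\bP^1$ fixes its ramification points and an order-$3$ automorphism of an elliptic curve with a fixed point forces $j=0$ --- closes that case correctly. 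Second, your ``delicate step'' is not actually delicate, and the worry about birational versus biregular is misplaced: since $\pi_P:X\to\bP^n$ is a \emph{finite} morphism and $X$ is smooth (hence normal), $X$ is the normalization of $\bP^n$ in $\bC(X)$, and the Galois group of $\bC(X)/\bC(\bP^n)$ acts on this normalization by biregular automorphisms over $\bP^n$. No linearisation via $\mathrm{PGL}(n+2)$ is needed to contradict Theorem \ref{autom} --- only the non-triviality of $\aut(X)$ --- and the Fano-ness of $X$ plays no role. Your sketched linearisation argument (that $\sigma$ permutes the points of $X$ on each line through $P$, hence descends from a homology of $\bP^{n+1}$) is true but is a separate, harder fact from the theory of Galois points; as written it is the least rigorous part of your proof and can simply be deleted.
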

\begin{proof}
If $P \in X$, the degree of $\pi_P: X \dashrightarrow \bP^n$ is two and so $M(\pi_P)=S_2$. 

Let now $P \notin X$ and assume by contradiction that $P$ is non uniform. Then $M(\pi_P)=A_3$ and so $X$ has a non trivial automorphism. This is a contradiction of Theorem \ref{autom}. Hence $M(\pi_P)=S_3$ for every $P \notin X$. 
\end{proof}

We are now ready to prove the main result of the paper.
\begin{thm}\label{interni}
Let $X \subset \bP^{n+1}$ be a general hypersurface of degree $d>1$. Then all the points $P \in \bP^{n+1}$ are uniform.
\end{thm}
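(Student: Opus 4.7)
The argument proceeds by induction on the degree $d$. The base cases $d=2$ (Remark \ref{d2}) and $d=3$ (Theorem \ref{d3}) are already established. Assume the theorem in degree $d-1$, and let $X$ be a general hypersurface of degree $d \geq 4$ in $\bP^{n+1}$ and $P \in \bP^{n+1}$ an arbitrary point. The plan is to apply Lemma \ref{induction} to a pencil $\cX \to \Delta$ through $X$ degenerating to $X_0 = Y \cup H$, with $Y$ a general hypersurface of degree $d-1$ chosen so that $P \notin Y$, and $H$ a hyperplane chosen so that $P \in H$ if $P \in X$ and $P \notin H$ otherwise. Since $P \notin Y$, we have $P \notin Y \cap H$, which is the hypothesis of the first case of Lemma \ref{induction}; this gives the inclusion $M(\pi_0) \subseteq M(\pi_s)$ with $\pi_0 = (\pi_P)_{|Y}$ and $\pi_s = (\pi_P)_{|X_s}$. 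The inductive hypothesis applied to $Y$ yields $M(\pi_0) = S_{d-1}$.

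For the outer case $P \notin X$, the projection $\pi_s$ has degree $d$. Since $P \notin H$, the restriction $(\pi_P)_{|H} : H \to \bP^n$ is an isomorphism of degree $1$, so in the limit the fibre of $\pi_s$ splits as $d-1$ points on $Y$ and one point on $H$. Thus $S_{d-1}$ embeds into $M(\pi_s) \subseteq S_d$ as a subgroup stabilizing the $H$-sheet. Since $X$ is irreducible, $M(\pi_s)$ is transitive on the $d$-element fibre; its stabilizer of the $H$-sheet contains $S_{d-1}$, which is $(d-2)$-transitive on the remaining $d-1$ sheets. By Lemma \ref{lemma1}, $M(\pi_s)$ is $(d-1)$-transitive on $d$ letters. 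Since the pointwise stabilizer of any $d-1$ elements in $S_d$ is trivial, orbit-stabilizer forces $|M(\pi_s)| = d!$, whence $M(\pi_s) = S_d$.

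For the inner case $P \in X$, the projection $\pi_s$ has degree $d - 1$. A general line through $P$ meets $H$ only at $P$ (which is removed from the fibre), so all $d - 1$ sheets of $\pi_s$ correspond in the limit to the $d - 1$ sheets coming from $Y$. The inclusion then reads $S_{d-1} = M(\pi_0) \hookrightarrow M(\pi_s) \subseteq S_{d-1}$, forcing $M(\pi_s) = S_{d-1}$.

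The main technical obstacle is arranging the pencil for every $P$; this is handled by the condition $P \notin Y$, always achievable by the generality of $Y$. A secondary subtlety is passing from the conclusion for the generic member of a specific pencil to a statement about a general $X$ in $|dH|$: this follows from openness of the uniform locus, since non-uniformity of pairs $(X,P)$ is a closed condition on $|dH| \times \bP^{n+1}$ whose image in $|dH|$ is closed (the second projection is proper), and the pencil argument shows this image is a proper subset, yielding an open dense locus of $X$ for which every $P$ is uniform.
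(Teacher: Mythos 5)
Your proposal follows the paper's strategy --- induction on $d$, degeneration of $X$ to $Y\cup H$ with $Y$ general of degree $d-1$, the semicontinuity Lemma \ref{induction}, and the transitivity Lemma \ref{lemma1} --- but you reorganize the case analysis in a genuinely different and leaner way. The paper lets the limit position of $P$ fall where it may and therefore treats four sub-cases, including $P\in Y$; that sub-case is the hardest one, requiring the primitivity discussion (Lemmas \ref{block} and \ref{lemma2}) and, for $d=4$, the exclusion of a degree-two factorization via the triviality of $\aut(X)$ (Theorem \ref{autom}). You instead observe that, since $P$ is fixed before the pencil is chosen and a general hypersurface of degree $d-1$ misses any fixed point, one may always arrange $P\notin Y$ and put $P$ on $H$ exactly when $P\in X$; then only the two easy sub-cases survive ($P\in H\setminus Y$ for inner points, $P\notin X_0$ for outer points) and Lemmas \ref{block}, \ref{lemma2} and Theorem \ref{autom} are never needed in the induction step. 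This choice is available in the paper's own setup (its proof likewise fixes $P$ and then degenerates), so the simplification is legitimate and is a real gain; your group-theoretic endgame is also slightly more careful than the paper's, since you invoke Lemma \ref{lemma1} only with $k\le d-1$ and finish with an order count rather than asserting $d$-transitivity directly.

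The one step that does not hold as written is your final paragraph. You are right that a quantifier must be crossed --- the pencil argument yields ``for every $P$ there is a dense open set of hypersurfaces for which $P$ is uniform,'' whereas the theorem asserts ``there is a dense open set of hypersurfaces for which every $P$ is uniform'' --- and it is to your credit that you flag this, since the paper passes over it in silence. But your bridge is a non sequitur: writing $\bP^N$ for the space of degree-$d$ hypersurfaces, knowing that the non-uniform locus $Z\subset\bP^N\times\bP^{n+1}$ is closed, that its image in $\bP^N$ is closed, and that every fibre $Z_P$ over a point $P\in\bP^{n+1}$ is a proper subset of $\bP^N$ does \emph{not} imply that the image of $Z$ in $\bP^N$ is proper --- the incidence variety $\{(X,P): P\in X\}$ enjoys all three properties and surjects onto $\bP^N$. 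Closing this requires an additional input, e.g.\ the finiteness of the non-uniform locus of a smooth hypersurface (\cite[Theorem 1.3]{CCM}), which bounds the dimension of $Z$ over the smooth locus and allows an argument on a dominating component, or the exhibition of a single $X$ with no non-uniform point. Note also that the closedness of $Z$ is itself not free: it follows from the specialization inclusion $M(f_0)\subseteq M(f_s)$ of the Corollary to Proposition \ref{lemmagenerale} and should be justified. Since your degeneration depends on $P$ even more than the paper's does, this is the point your write-up most needs to repair.
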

\begin{proof}
The result has been already proven for $d \leq 3$ (Theorem \ref{d3}). 

We work by induction on $d=\deg(X)$. Assume that every point is uniform for a general hypersurface of degree $d-1$. Let $P \in \bP^{n+1}$ be a point and degenerate $X$ onto $X_0=Y \cup H$ as in Lemma \ref{induction}. Recall that the hypersurface $Y$ is general of degree $d-1$.
\medskip

Assume that $P \in X$, hence $P \in X_0$ by degeneration. If $P \in Y$, by induction $M((\pi_0)_{|Y})=S_{d-2}$. Moreover, $S_{d-2} \subseteq M(\pi_P)$ by Lemma \ref{induction}. Therefore, $M(\pi_P)$ is a transitive group acting on a general fibre of $\pi_P$ and, by construction, it contains a subgroup that is $d-2$ transitive on $d-2$ points of the fibre. Therefore $M(\pi_P)$ is $d-1$ transitive by Lemma \ref{lemma1}, and so $P$ is uniform. If $P \in H$, then $M((\pi_0)_{|Y})=S_{d-1}$. Therefore, applying Lemma \ref{induction} we conclude that $P$ is uniform for $X$.

\medskip
Assume now that $P \notin X$. We recall that, in the degeneration, the point $P$ may be in $X_0$. If $P \notin Y$, then $M((\pi_0)_{|Y})=S_{d-1}$. Moreover, by Lemma \ref{induction}, it is contained in $M(\pi_P)$. Hence it is a group acting transitively on a general fibre of $\pi_P$ and that contains a subgroup that is $d-1$ transitive on $d-1$ points of the fibre. Therefore, by Lemma \ref{lemma1}, $M(\pi_P)$ is $d$ transitive, i.e. the point $P$ is uniform.

If $P \in Y $, then $S_{d-2}= M((\pi_0)_{|Y})$. By Lemma \ref{induction} we have that $M(\pi_P)$ contains a subgroup that is $d-2$ transitive on $d-2$ points of a general fibre. If moreover $M(\pi_P)$ is primitive, then by Lemma \ref{lemma2} we have that it is $2$-transitive. If we apply again Lemma \ref{lemma1} we get that it is $d$-transitive on a general fibre, i.e. $P$ is uniform. 

We are then left to prove that the action of $M(\pi_P)$ is primitive. If $d \geq 5$ the action of $M(\pi_P)$ is clearly primitive since $d-2$ does not divide $d$ (see Lemma \ref{block}). If $d=4$, assume by contradiction that the map $\pi_P$ is decomposable. The only possibility is that it factors via two maps of degree two $X \stackrel{2:1}{\to} Y \stackrel{2:1}{\to} \bP^n.$ The first map can be seen as an involution of the general quartic, hence a non trivial automorphism of $X$. This contradicts Theorem \ref{autom}. 

Therefore, every point is uniform. 

\end{proof}

\section*{Acknowledgements}
The author is supported by MIUR: Dipartimenti di Eccellenza Program (2018-2022) - Dept. of Math. Univ. of Pavia and by PRIN 2017 "Moduli spaces and Lie Theory" code 2017YRA3LK\_003. 
I would like to thank Gian Pietro Pirola for introducing me to the problem and for all the help he gave during the preparation of this paper. I also thank Ciro Ciliberto, Riccardo Moschetti, Lidia Stoppino and Thomas Dedieu for helpful discussions and suggestions.

\bibliographystyle{alpha}
\bibliography{bib}
 
\end{document}